\documentclass[12pt,english]{article}

% Encoding and language
\usepackage[T1]{fontenc}
\usepackage[utf8]{inputenc}
\usepackage[main=english]{babel}

% Page geometry
\usepackage{geometry}
\geometry{tmargin=3cm,bmargin=3cm,lmargin=3cm,rmargin=3cm}

% Math packages
\usepackage{amsmath,amsthm,amssymb,amsfonts,mathtools}
\usepackage{mathrsfs}
\usepackage{esint}

% Graphics
\usepackage{graphicx}   % pdflatex-friendly; use PDF/EPS/JPG/TIFF
\usepackage{float}
\usepackage{tikz}       % OK for vector figures (compile with pdflatex)

% Other
\usepackage{comment}
\usepackage{indentfirst}

% Theorem styles
\theoremstyle{definition}

\newtheorem{definition}{Definition}[section]
\newtheorem{proposition}{Proposition}[section]

\sloppy

% Title and author
\title{Generic density of stationary geodesic nets that are not closed geodesics}
\author{Talant Talipov}
\date{}

\begin{document}
\thispagestyle{empty}
\maketitle

\begin{abstract}
    We prove that for a Baire-generic Riemannian metric on a closed
    smooth manifold of dimension greater than or equal to $3$, the union of stationary geodesic nets that are not closed geodesics forms a dense set. This result confirms a Nabutovsky--Parsch conjecture in this case.
\end{abstract}

\section{Introduction}
A classical question in Riemannian geometry, going back to Poincar\'e, asks whether every closed manifold admits infinitely many periodic geodesics. By a result of Lyusternik and Fet~\cite{LyusternikFet} any closed manifold has at least one closed geodesic, and many results establish infinitely many in special cases. For example, Rademacher showed that for a generic metric on any compact simply-connected manifold there are infinitely many closed geodesics~\cite{Rademacher}. Bangert and Franks proved that every Riemannian metric on $S^2$ admits infinitely many closed geodesics~\cite{Bangert,Franks}, and Hingston later gave lower bounds on the number of closed geodesics of length $\le x$ on $S^2$~\cite{Hingston}.

On the other hand, Katok constructed striking examples of irreversible Finsler metrics on rank--one symmetric spaces that have only finitely many closed geodesics~\cite{Katok}. In particular, Katok showed that on spheres, real and complex projective spaces, quaternionic projective spaces, and the Cayley plane, there exist non-reversible Finsler metrics each of which has only finitely many distinct prime closed geodesics (see also Ziller's study of these examples~\cite{Ziller}). These constructions demonstrate that topology alone does not force infinitely many periodic geodesics under arbitrary metrics.

\begin{figure}[H]
    \centering
    % Please crop the image tightly; adjust trim values as needed: trim=left bottom right top
    \includegraphics[width=4.5in,trim=0 0 0 0,clip]{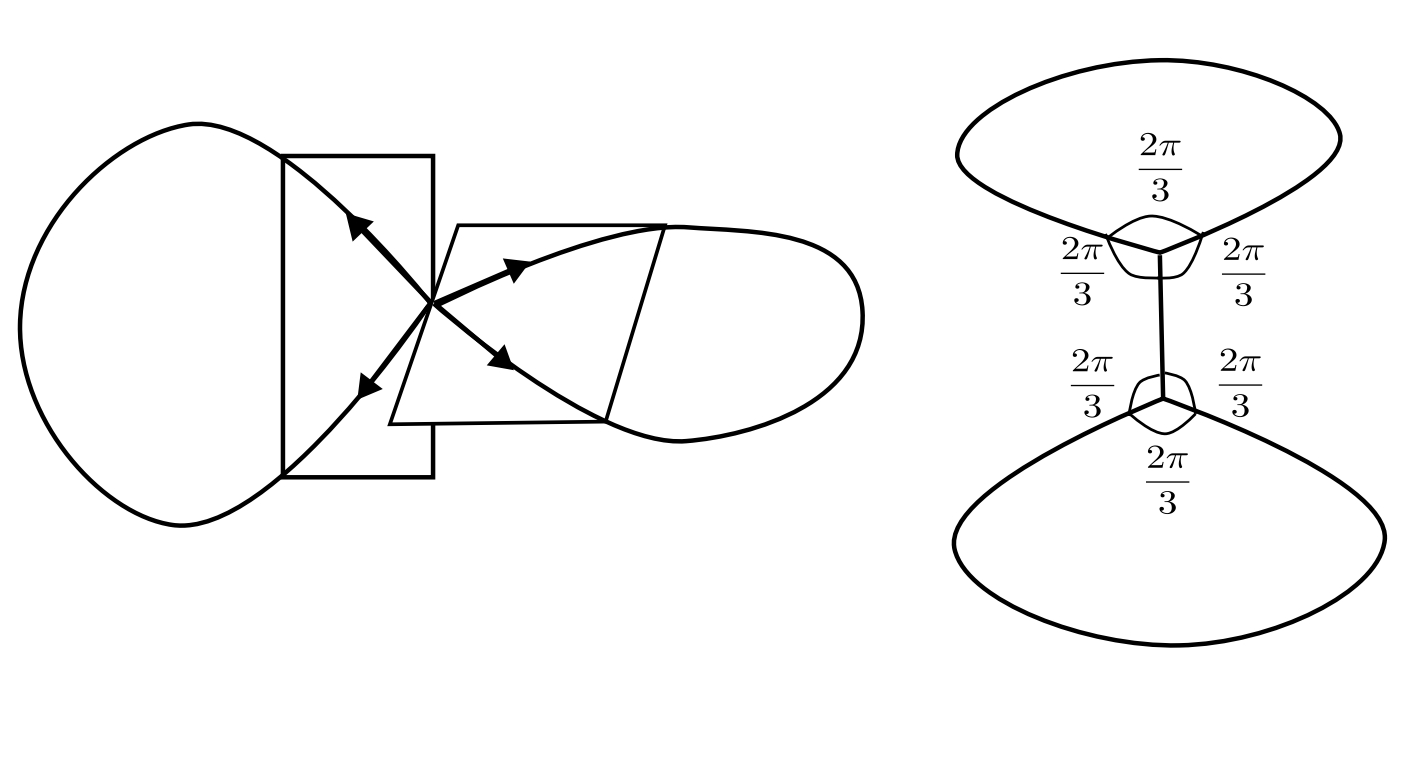}
    \caption{Stationary twisted figure-eight (left) and stationary eyeglass (right)}
    \label{drawing_1}
\end{figure}

A \textit{geodesic net} is a finite weighted graph immersed in $(M,g)$ whose edges are geodesic segments. A geodesic net is called \textit{stationary} if it is a critical point of the length functional $L_g$ with respect to $g$. This is equivalent to the condition that the sum of the inward pointing unit tangent vectors (with multiplicity) is zero at every vertex. Allard and Almgren showed that any one-dimensional stationary integral varifold of positive density is precisely a stationary geodesic net~\cite{AllardAlmgren}. In particular, any union of closed geodesics is a trivial example of a stationary geodesic net.

There are relatively few general existence results for stationary geodesic nets. Nabutovsky and Rotman~\cite{Rotman,NabutovskyRotman} constructed short stationary geodesic nets on any closed manifold. Recently Liokumovich--Staffa proved that for a generic Riemannian metric the union of all stationary geodesic nets is dense in the manifold~\cite{LiokumovichStaffa}, and Li--Staffa further showed that in dimension $3$ this generic density result can be refined: for a generic metric, the embedded stationary geodesic nets are equidistributed throughout $M$~\cite{LiStaffa}. Staffa later extended equidistribution to all dimensions $n \geq 3$~\cite{Staffa_Weyl}. However, these results allow for the possibility that the stationary geodesic nets in question are unions of closed geodesics, i.e., they may be trivial as geodesic nets. See also analogous generic results for minimal hypersurfaces in~\cite{IrieMarquesNeves,MarquesNevesSong}.

On the other hand, there are examples of stationary geodesic nets that contain no closed geodesics. Hass--Morgan~\cite{HassMorgan} proved that any convex metric on $S^2$ which is sufficiently $C^2$--close to the round metric admits a stationary geodesic $\theta$--graph. Cheng~\cite{Cheng} showed that for each $n\ge3$ there exists a closed $n$--manifold $M$ and an open set $U$ in the $C^\infty$--topology such that every metric $g\in U$ admits a stationary geodesic net containing no closed geodesics. Notably, these are the only known results that guarantee the existence of stationary geodesic nets not composed of periodic geodesics on an open set of metrics on a closed manifold.

We call a stationary geodesic net \textit{essential} if it is not a union of closed geodesics. Simple examples of essential stationary geodesic nets in $M^n$ for $n \geq 3$ are illustrated in Fig.~\ref{drawing_1}. Stationary twisted figure-eight consists of one vertex and two geodesic loops based at that point. A stationary eyeglass consists of two vertices connected by a geodesic edge, with a geodesic loop attached at each vertex.

\textbf{Main Theorem.} Let $M^n$ be a closed manifold with $n\ge3$. For a $C^\infty$‑generic Riemannian metric $g$ on $M$, the union of all essential embedded stationary geodesic nets is dense in $M$.

This establishes a stronger version of a Nabutovsky--Parsch conjecture (see Problem~2.0.1 in~\cite{NabutovskyParsch}) for generic metrics on closed manifolds of dimension at least three.

\textbf{Outline.}
The paper is structured as follows. In Section~\ref{sec-2}, we present an overview of geodesic nets. In Section~\ref{sec-3}, we give a proof of Main Theorem. To prove genericity we need to establish both openness and denseness. Openness follows from the Structure Theorem for stationary geodesic nets. To prove denseness we combine the Liokumovich--Staffa density result with a localized metric perturbation that produces an essential stationary geodesic net meeting any prescribed open set. Concretely, given two closed geodesics that lie arbitrarily close we consider two cases. If they are disjoint, we join them by a minimizing geodesic segment and perform a localized conformal deformation so that the inward unit tangent vectors at the joining vertex (with suitable multiplicities) sum to zero, producing an eyeglass geodesic net (see Figure~\ref{drawing_1}, right). If they intersect, we perturb the metric to turn the intersection into a nontrivial singular vertex whose combinatorial model is a twisted figure-eight geodesic net (see Figure~\ref{drawing_1}, left). In both cases the perturbations are supported in small, disjoint neighborhoods and can be made arbitrarily $C^k$-small. This yields density.

\textbf{Acknowledgments.}
The author is grateful to Prof. Yevgeny Liokumovich for his invaluable supervision and encouragement throughout this work, and to Bruno Staffa for generously explaining the Structure Theorem for stationary geodesic nets. Part of this work was carried out during the author's stay at Nazarbayev University in Astana; the author thanks Prof. Durvudkhan Suragan and Makhpal Manarbek for their hospitality. This work was supported by the Dr. Sergiy and Tetyana Kryvoruchko Graduate Scholarship in Mathematics.

\section{Geodesic nets} \label{sec-2}
Let $M$ be an $n$-dimensional smooth manifold. We recall the framework for geodesic nets introduced by Staffa~\cite{Staffa}.
\begin{definition}
    A \textit{weighted multigraph} is a graph $\Gamma = (\mathscr{E} , \mathscr{V}, \{\pi(E)\}_{E \in \mathscr{E}}, \{n(E)\}_{E \in \mathscr{E}})$ consisting of:
    \begin{itemize}
        \item A set of edges $\mathscr{E}$. For each $E \in \mathscr{E}$, we fix a homeomorphism $\imath_E : E \rightarrow [0, 1]$.
        \item A set of vertices $\mathscr{V}$.
        \item For each $E \in \mathscr{E}$, a map $\pi_E : \{0, 1\} \rightarrow \mathscr{V}$ which sends each of the boundary points of the edge $E$ (identified with 0 and 1) to their corresponding vertex $v$.
        \item A multiplicity $n(E) \in \mathbb{N}$ assigned to each edge $E \in \mathscr{E}$.
    \end{itemize}
\end{definition}
\begin{definition}
    A \textit{$\Gamma$-net} $G$ on $M$ is a continuous map $G : \Gamma \rightarrow M$ which is a $C^2$ immersion when restricted to the edges of $\Gamma$.
\end{definition}
\begin{definition}
    We say that a $\Gamma$-net $G$ is \textit{embedded} if the map $G : \Gamma \rightarrow M$ is injective (notice that by the compactness of $\Gamma$ this is equivalent to say that the map $G : \Gamma \rightarrow M$ is a homeomorphism onto its image).
\end{definition}
In the following we will omit the superscript $k$ for simplicity, assuming it is fixed. Given $g \in M$ and $G \in \Omega(\Gamma, M )$, we define the $g$-length of $G$ by
\begin{equation*}
    l_g(G) = \int_\Gamma \sqrt{g_{G(t)}\Bigl(\dot{G}(t), \dot{G}(t)\Bigr)} dt
\end{equation*}
where given a measurable function $h: \Gamma \rightarrow \mathbb{R}$ which is integrable along each edge $E \in \mathscr{E}$, we define
\begin{equation*}
    \int_\Gamma h(t) dt = \sum_{E \in \mathscr{E}} n(E) \int_{[0,1]} h(t)dt.
\end{equation*}
\begin{definition}
    A $\Gamma$-net $G \in \Omega(\Gamma, M)$ is a \textit{stationary geodesic network} with respect to the metric $g \in M$ if it is a critical point of the length functional $l_g : \Omega(\Gamma, M) \rightarrow \mathbb{R}$.
\end{definition}
A constant speed parametrized $\Gamma$-net $G_0$ is stationary with respect to $l_g$ if and only if:
\begin{itemize}
    \item $G_0(t) = 0$ along each edge $E \in \mathscr{E}$ (i.e. the edges of $\Gamma$ are mapped to geodesic segments).
    \item $V(G_0)(v) = 0$ for all $v \in V$ . This means that the sum with multiplicity of the inward unit tangent vectors to the edges concurring at each vertex $v$ must be 0.
\end{itemize}

\section{Proof of Main Theorem} \label{sec-3}
% (The remainder of the paper is unchanged except for minor edits to notation and references.)

\begin{proposition}\label{prop}
    Suppose $n \geq 3$ and let $k \in \mathbb{Z}_{> 0}$ and $U \subset M$ be a nonempty open set. The set $\mathcal{M}^k_U$ of $C^k$-smooth Riemannian metrics on $M^n$ for which there exists a nondegenerate, essential, embedded stationary geodesic net $\gamma$ intersecting $U$ is open and dense in the $C^k$-topology.
\end{proposition}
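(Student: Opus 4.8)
\emph{Proof sketch (proposal).}

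\textbf{Openness.} The plan is to deduce this from the Structure Theorem. Suppose $g\in\mathcal M^k_U$ and let $\gamma$ be a nondegenerate, essential, embedded stationary geodesic net for $g$ of combinatorial type $\Gamma$ with $\gamma(\Gamma)\cap U\neq\varnothing$. Since the second variation of $l_g$ at $\gamma$ has trivial kernel, applying the implicit function theorem to the stationarity equation on $\Omega(\Gamma,M)$ (this is the Structure Theorem) produces a $C^k$-neighborhood $\mathcal W\ni g$ and a continuous family $g'\mapsto\gamma_{g'}$ of nondegenerate stationary $\Gamma$-nets with $\gamma_g=\gamma$ and $\gamma_{g'}\to\gamma$ in $C^1(\Gamma,M)$ as $g'\to g$. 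I would then observe that the three remaining conditions are open: (i) injectivity of a $C^1$-close map on the compact space $\Gamma$ persists, so $\gamma_{g'}$ stays embedded; (ii) $\gamma_{g'}(\Gamma)\cap U\neq\varnothing$ for $g'$ near $g$ because $U$ is open; (iii) essentialness is preserved, because if $\Gamma$ has a vertex of degree $\ge 3$ (as for an eyeglass) then no $\Gamma$-net is a union of closed geodesics, while if $\Gamma$ is a figure-eight then being essential means the four inward unit tangents at the vertex admit no antipodal pairing, an open condition. Hence $\mathcal W\subset\mathcal M^k_U$.

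\textbf{Density: reduction to a surgery.} Fix $g_0$ and $\varepsilon>0$. I would first replace $g_0$ by a nearby $C^\infty$ metric (cost $<\varepsilon/3$) that is bumpy --- every closed geodesic nondegenerate --- and for which the Liokumovich--Staffa density theorem and the Li--Staffa/Staffa equidistribution theorem apply, so that in particular some embedded stationary geodesic net $N$ meets $U$. If $N$ is essential we are already finished, modulo the nondegeneracy adjustment discussed below. Otherwise $N$ is a union of closed geodesics, hence contains an embedded, nondegenerate closed geodesic $\gamma_1$ meeting $U$. Now take a tiny ball $B'\subset U$ close to but disjoint from $\gamma_1$ and apply the density theorem once more near $B'$ (or, alternatively, insert a $C^k$-small localized bump inside $B'$ creating a short closed geodesic there); again, if the resulting net is essential we are finished, so we may assume it contains a closed geodesic $\gamma_2\neq\gamma_1$ meeting $B'$. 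Then $\gamma_1$ and $\gamma_2$ possess a pair of points at distance $\le\delta$ with $\delta$ as small as we wish, and $\gamma_1$ still meets $U$.

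\textbf{The surgery.} Pick $p_1\in\gamma_1$, $p_2\in\gamma_2$ with $d(p_1,p_2)\le\delta$. \emph{Disjoint case.} Join $p_1$ to $p_2$ by a minimizing geodesic segment $e$ of length $\le\delta$; since $n\ge3$ a slight adjustment of $e$ or of the basepoints makes $e\cup\gamma_1\cup\gamma_2$ embedded away from $p_1,p_2$. Give the eyeglass $\Gamma$ the loops $\ell_1=\gamma_1,\ \ell_2=\gamma_2$ with multiplicities $m_1,m_2$ and the edge $e$ with multiplicity $m_e$. This is not yet stationary: at $p_1$ the two inward tangents of $\ell_1$ are antipodal, so the vertex vector equals $m_e$ times the unit tangent of $e$ at $p_1$, and similarly at $p_2$. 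To repair this I would apply conformal deformations $g\mapsto e^{2\phi}g$ with $\phi$ supported in two small balls about $p_1$ and $p_2$ that the rest of the net avoids: the vertex vector of the geodesic loop based at $p_i$ depends smoothly on the metric, vanishes for $\gamma_i$, and --- being, up to sign, the gradient in the basepoint of the loop's length --- can be pushed to any prescribed small vector by a suitably chosen $C^k$-small $\phi$. Taking $m_1,m_2\gg m_e$ makes the corrections needed at $p_1,p_2$ small enough to realize, and the result is a stationary, essential (degree-three vertices), embedded eyeglass net meeting $U$. \emph{Intersecting case.} If $\gamma_1,\gamma_2$ cross at a point $q$, the four geodesic directions already sum to zero but form two antipodal pairs, so $\gamma_1\cup\gamma_2$ is a trivial net; here a localized conformal perturbation near $q$ should bend the four stubs so their inward tangents still sum to zero but admit no antipodal pairing, giving a twisted figure-eight. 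In either case all perturbations are $C^k$-small ($<\varepsilon/3$) and supported in small disjoint balls, so the final metric lies within $\varepsilon$ of $g_0$; finally, since the construction carries free parameters (the positions of $p_1,p_2$ and the conformal profiles) and nondegeneracy of stationary $\Gamma$-nets is a generic condition, a generic choice of parameters --- or one further $C^k$-small perturbation preserving the open properties --- yields a nondegenerate such net, so the metric lies in $\mathcal M^k_U$.

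\textbf{Main obstacle.} I expect the quantitative control of the localized surgery to be the crux: one must show that a $C^k$-small, compactly supported conformal factor can prescribe the vertex vector of the geodesic loop (respectively untwist the crossing) while leaving the other edges and the closed geodesics geodesic away from the vertices and preserving embeddedness, and one must confirm that the closeness of $\gamma_1,\gamma_2$ --- guaranteed only through a common small ball --- makes the required corrections genuinely small. Organizing this by keeping perturbation supports strictly inside balls disjoint from the rest of the net, and running the balancing with $m_1,m_2\gg m_e$, is the route I would follow; a secondary point requiring care is that the essential embedded net should survive to a nearby nondegenerate metric, which is why I would build nondegeneracy into the parameter count rather than try to perturb a given degenerate net.
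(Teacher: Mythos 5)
Your openness argument and your overall density strategy (reduce via the Liokumovich--Staffa density theorem and a bumpy-metric preliminary perturbation to two nearby closed geodesics, then perform a localized $C^k$-small surgery producing an eyeglass in the disjoint case and a twisted figure-eight in the intersecting case, and finish with the Bumpy Metrics Theorem for nets) coincide with the paper's. But the core of the surgery is where your proposal has a genuine gap, and it is exactly the step you flag as the ``main obstacle.'' In the disjoint case you keep $\gamma_1,\gamma_2$ and the connecting segment $e$ fixed and claim that a $C^k$-small conformal factor supported in small balls around $p_1,p_2$ can push the vertex vector of the geodesic loop based at $p_i$ to \emph{exactly} the prescribed value $-(m_e/m_i)$ times the inward tangent of $e$. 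This is not justified: once you perturb the metric near $p_1$, the curve $\gamma_1$ ceases to be geodesic inside the support, there is no guarantee of a nearby geodesic loop based precisely at $p_1$ (that would require an implicit-function argument with a surjectivity/nondegeneracy hypothesis you have not established), and the support of the perturbation necessarily contains $p_1$, through which both the loop and the edge $e$ pass, so you must simultaneously keep $e$ geodesic, keep the loop geodesic away from $p_1$, hit the balance condition exactly (not approximately), and preserve embeddedness. Saying the vertex vector ``can be pushed to any prescribed small vector'' is precisely the statement that needs proof, and the $m_1,m_2\gg m_e$ device only shrinks the target, it does not produce the map. The intersecting case is likewise only asserted (``a localized conformal perturbation should bend the four stubs so their inward tangents still sum to zero''), with no construction of how the exact sum-zero condition survives the bending.

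The paper circumvents this by never trying to prescribe a vertex vector through a metric perturbation. In Case 1 it moves the junction to $a_t=\exp_a(t\dot\rho)$ and replaces the two arcs of $\alpha$ near $a$ by explicit geodesic stubs with initial velocities $\pm r P_{a\to a_t}(v)+t\dot\rho$; orthogonality of $\rho$ to $\alpha$ and $\beta$ at the distance-minimizing points plus parallel transport force the vertex imbalance to be exactly $-\lambda(t)\dot\rho$ with $\lambda(t)=2t/\sqrt{r^2+t^2}$, which is then killed \emph{exactly} by choosing $t$ with $\lambda(t)$ rational and assigning integer multiplicities $m(t)$ to the loop and $n(t)$ to $\rho$. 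The conformal factor is used only afterwards, and only for the linear, explicit task of cancelling the (small, $o(1)$ in $C^k$) geodesic curvature of the bent loop via $(\nabla_g f_t)^\perp=k_{\alpha_t}$ in a thin tube disjoint from the rest of the net. In Case 2 the two geodesics are bent out of their common plane in opposite $\pm t\,\partial_{x_3}$ directions (this is where $n\ge3$ enters), so the four inward tangents at $v$ sum to zero identically by symmetry, and again the conformal factor merely removes the small curvature of the bent curves. So while your decomposition into cases is the right one, the balancing mechanism you propose is replaced in the paper by an explicit construction in which the balance holds by arithmetic and symmetry, and the metric perturbation solves only a curvature-cancellation problem whose smallness is immediate; as written, your proposal does not close that gap.
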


\begin{proof}[Proof of Proposition \ref{prop}]
    Let $\tilde g \in \mathcal{M}^k_U$ and $\tilde\gamma$ be as in the proposition statement. Since $\tilde\gamma$ is nondegenerate, the Inverse Function Theorem implies that for every Riemannian metric $g$ sufficiently close to $\tilde g$, there exists a nondegenerate, essential, embedded stationary geodesic net $\gamma$ close to $\tilde\gamma$ with identical combinatorial type (see Lemma 4.6 in~\cite{Staffa} and Lemma 2.6 in~\cite{LiokumovichStaffa}). In particular, $\gamma \cap U \neq \emptyset$ for $g$ sufficiently close to $\tilde g$. This shows $\mathcal{M}_U$ is open.

    Let $\tilde g$ be an arbitrary $C^k$ Riemannian metric on $M$ and let $\mathcal V$ be an arbitrary $C^k$--neighborhood of $\tilde g$. By the Bumpy Metrics Theorem for stationary geodesic nets (Theorem 0.3 in \cite{Staffa}) there exists $g\in\mathcal V$ such that every essential embedded stationary geodesic net for $g$ is nondegenerate. If any such net intersects $U$, then $g\in\mathcal M_U$ and we are done.
    
    Assume instead that every essential embedded stationary geodesic net for $g$ is contained in $M\setminus U$. By Theorem 1.1 in \cite{LiokumovichStaffa} we may also assume that the union of all embedded stationary geodesic nets in $(M,g)$ is dense. Hence there exists an embedded stationary geodesic net meeting $U$. By the assumption this net cannot be essential, so it must be a closed geodesic. Thus we get a closed geodesic $\alpha$ with $\alpha\cap U\neq\emptyset$.
    
    By density again there exists an embedded stationary geodesic net $\beta$ with $\beta\cap U\neq\emptyset$ and
    \[
    \operatorname{dist}(\alpha,\beta)<r_{\mathrm{inj}}(g),
    \]
    where $r_{\mathrm{inj}}(g)$ is the injectivity radius of $(M,g)$. By our hypothesis $\beta$ is also a closed geodesic.
    
    \textbf{Case 1.} Suppose $\alpha\cap\beta=\emptyset$. Pick points $a\in\alpha$ and $b\in\beta$ realizing the distance $\operatorname{dist}(\alpha,\beta)$ and set
    \[
    r:=\operatorname{dist}(a,b)>0.
    \]
    Let $\rho$ be the minimizing geodesic from $a$ to $b$; by minimality we have
    \begin{equation} \label{eq:scalar-product}
        \langle\dot\rho,\dot\alpha\rangle_a=\langle\dot\rho,\dot\beta\rangle_b=0.
    \end{equation}
    
    Fix \(t>0\) small and set
    \[
    p_{\pm}=\exp_a(\pm r v),\qquad a_t=\exp_a\bigl(t\dot\rho|_a\bigr),
    \]
    where \(v\) is the unit tangent to \(\alpha\) at \(a\). Assume \(\alpha\) is traversed \(p_-\to a\to p_+\) and $\rho$ is traversed \(b\to a_t\to a\). Consider geodesic segments \(\gamma_t^\pm\) starting at \(a_t\) whose initial velocities equal
    \[
    \pm r\,P_{a\to a_t}(v)+t\dot\rho,
    \]
    where \(P_{a\to a_t}\) denotes parallel transport along \(\rho\) from \(a\) to \(a_t\) (see Fig.~\ref{drawing_2}). Since parallel transport preserves inner products and by \eqref{eq:scalar-product} we have \(\langle\dot\rho,P_{a\to a_t}(v)\rangle=0\). Hence the two inward unit tangent vectors at \(a_t\) are
    \[
    \frac{1}{\sqrt{r^2+t^2}}\bigl(-rP_{a\to a_t}(v)-t\dot\rho\bigr),\qquad
    \frac{1}{\sqrt{r^2+t^2}}\bigl(rP_{a\to a_t}(v)-t\dot\rho\bigr),
    \]
    and their sum equals
    \begin{equation}\label{eq:vector}
    \frac{-2t}{\sqrt{r^2+t^2}}\,\dot\rho\Big|_{a_t}.
    \end{equation}

    \begin{figure}
        \centering
        \includegraphics[scale=0.6]{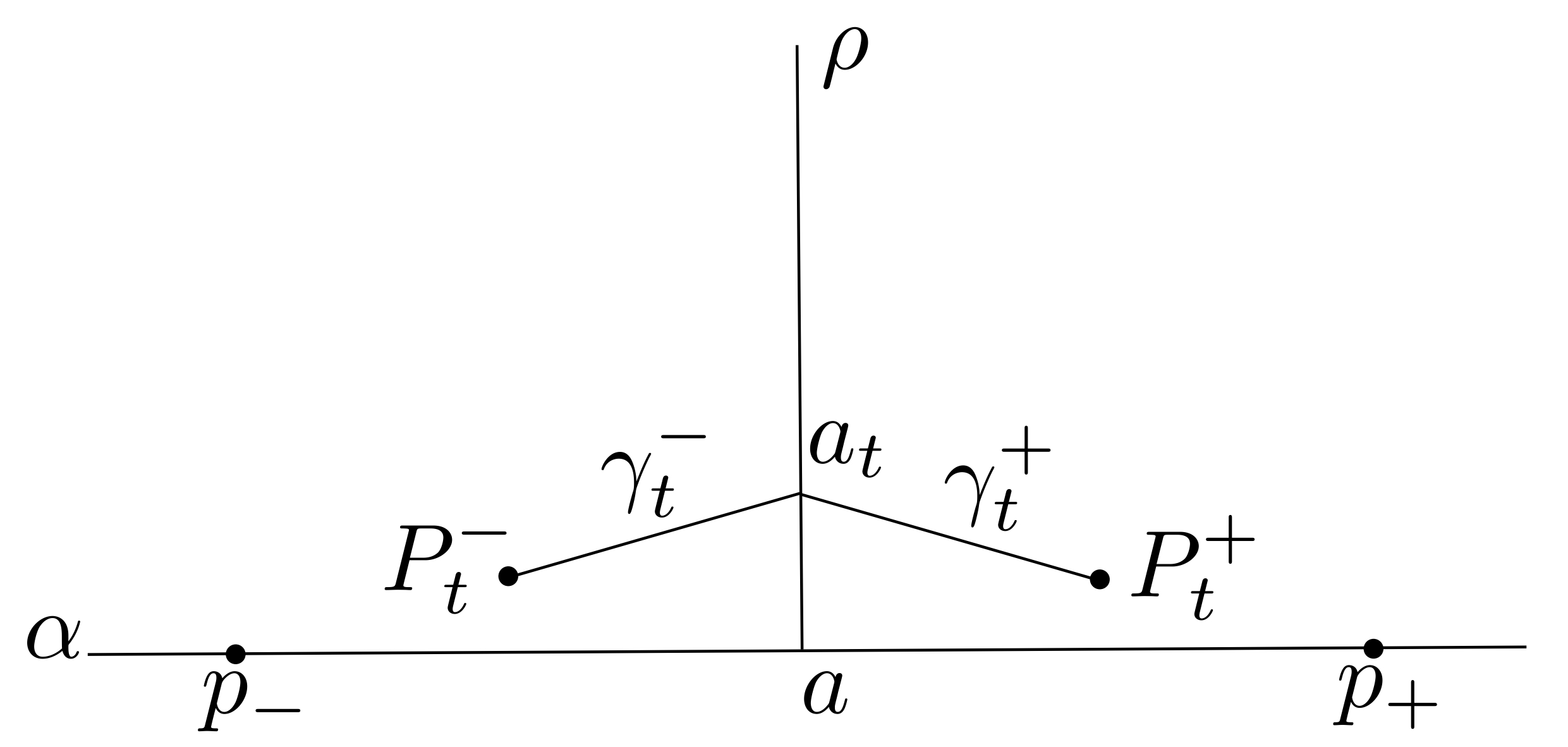}
        \caption{Geodesics $\alpha$ and $\rho$ near $a$}
        \label{drawing_2}
    \end{figure}
    
    We represent the geodesic segments \(\gamma_t^\pm\) as graphs over \(\alpha\).  Fix parameters \(s_1<s_2<s_3\) such that $\alpha(s_2) = a$ and $s_2 - s_1 = s_3 - s_2 = r/2$. Set
    \[
    \gamma_t^-(s)=\exp_{\alpha(s)}\bigl(v_t^-(s)\,\nu_t(s)\bigr),\qquad s\in[s_1,s_2],
    \]
    with \(\gamma_t^-(s_1)=P_t^-\) and \(\gamma_t^-(s_2)=a_t\), and
    \[
    \gamma_t^+(s)=\exp_{\alpha(s)}\bigl(v_t^+(s)\,\nu_t(s)\bigr),\qquad s\in[s_2,s_3],
    \]
    with \(\gamma_t^+(s_2)=a_t\) and \(\gamma_t^+(s_3)=P_t^+\). Here \(v_t^\pm\) are smooth scalar functions and \(\nu_t(s)\) is a smooth unit normal field along \(\alpha\). The points \(P_t^\pm\) are the unique points of \(\gamma_t^\pm\) whose nearest-point projections to \(\alpha\) are \(\alpha(s_1)\) and \(\alpha(s_3)\), respectively. Since
    \[
        a_t \rightarrow a
    \]
    and
    \[
        \frac{1}{\sqrt{r^2+t^2}}\bigl(\pm rP_{a\to a_t}(v)+t\dot\rho\bigr) = \frac{1}{\sqrt{1+(t/r)^2}}\bigl(\pm P_{a\to a_t}(v)+\frac{t}{r}\dot\rho\bigr) \rightarrow \pm v
    \]
    as $t \rightarrow 0$, smooth dependence of geodesics on initial conditions implies
    \[
        \lVert v_t^{\pm} \rVert_{C^{k+2}} \rightarrow 0
    \]
    as $t \rightarrow 0$.

    We now construct a smooth curve $\rho_t^-: [s_0, s_1] \rightarrow M$ such that
    \[
        \rho_t^-(s) = \exp_{\alpha(s)}\bigl(u_t^-(s) \nu_{t}(s)\bigr), \qquad \rho_t^-(s_0) = p_{-}, \quad \rho_t^-(s_1) = P_t^-,
    \]
    where $u_t^-: [s_0,s_1] \rightarrow \mathbb{R}$ is smooth. We require $\rho_t^-$ to agree with $\alpha$ at $p_{-}$ and with $\gamma_t^-$ at $P_t^-$ up to order $(k+2)$. We also require
    \[
        \lVert u_t^- \rVert_{C^{k+2}} = o(1)
    \]
    as $t \rightarrow 0$. Choose a smooth bump \(\psi\in C_c^\infty(\mathbb R)\) with \(\psi\equiv1\) on \([0,(s_1-s_0)/4]\) and \(\psi\equiv0\) outside of $[0,(s_1-s_0)/2]$. Define \(u_t^-\) on \([s_0,s_1]\) by taking a Taylor polynomial of \(v_t^-\) at \(s_1\) of degree \(k+2\) and multiplying it by a cutoff that forces the function and all its derivatives to vanish at \(s_0\). Explicitly,
    \[
    u_t^-(s)=\psi\bigl(s_1-s\bigr)\sum_{j=0}^{k+2}\frac{(s-s_1)^j}{j!}\bigl(v_t^-\bigr)^{(j)}(s_1).
    \]
    Then \(\rho_t^-(s)=\exp_{\alpha(s)}\bigl(u_t^-(s)\nu_t(s)\bigr)\) is smooth, satisfies the required conditions at the endpoints, and because \(\|v_t^-\|_{C^{k+2}}\to0\) we have
    \begin{equation}\label{eq:estimate-1}
        \|u_t^-\|_{C^{k+2}} \to 0\qquad (t\to0),
    \end{equation}
    so \(\rho_t^-\) is \(C^{k+2}\)--close to the corresponding arc of \(\alpha\). A similar construction produces a smooth curve \(\rho_t^+:[s_3,s_4]\to M\) of the form
    \[
    \rho_t^+(s)=\exp_{\alpha(s)}\bigl(u_t^+(s)\,\nu_t(s)\bigr),
    \]
    with \(\rho_t^+(s_3)=P_t^+\) and \(\rho_t^+(s_4)=p_+\). Here \(u_t^+\) is smooth on \([s_3,s_4]\), \(\rho_t^+\) matches \(\gamma_t^+\) at \(P_t^+\) and \(\alpha\) at \(p_+\) up to order \(k+2\), and
    \begin{equation}\label{eq:estimate-2}
        \|u_t^+\|_{C^{k+2}} \to 0\qquad (t\to0).
    \end{equation}

    \begin{figure}
        \centering
        \includegraphics[scale=0.6]{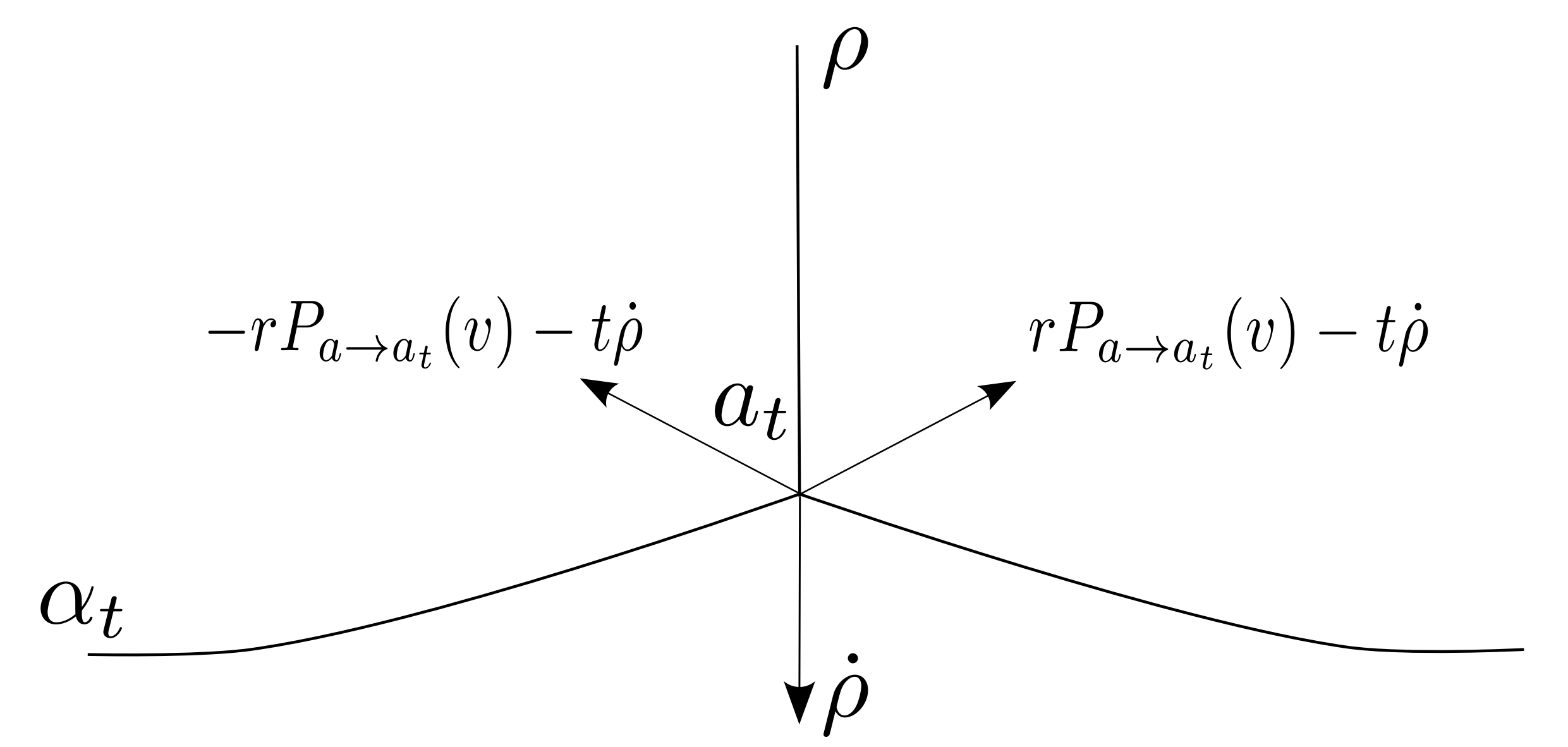}
        \caption{Triple junction at $a_t$}
        \label{drawing_3}
    \end{figure}
    
    Concatenating these pieces we obtain a closed loop \(\alpha_t\) smooth except at \(a_t\) (see Fig.~\ref{drawing_3}):
    \[
    \alpha_t=\rho_t^-\,\cup\,\gamma_t^-\,\cup\,\gamma_t^+\,\cup\,\rho_t^+\,\cup\,\alpha|_{[p_+,p_-]}.
    \]
    By~\eqref{eq:estimate-1} and~\eqref{eq:estimate-2} the curvature vector \(k_{\alpha_t}\) satisfies \(\|k_{\alpha_t}\|_{C^k}\to0\) as \(t\to0\).
    
    Since the rationals are dense, we may choose small \(t\) so that the scalar
    \[
    \lambda(t):=\frac{2t}{\sqrt{r^2+t^2}}
    \]
    is rational; write \(\lambda(t)=n(t)/m(t)\) with integers \(m(t),n(t)>0\). By~\eqref{eq:vector}, endowing the loop \(\alpha_t\) with multiplicity \(m(t)\) and the geodesic \(\rho\) with multiplicity \(n(t)\) makes the sum of inward unit tangent vectors at \(a_t\) equal to zero.
    
    We now perform a local conformal perturbation to make $\alpha_{t}$ geodesic. Let \(\{n_1,\dots,n_{n-1}\}\) be an orthonormal normal frame along \(\alpha_t\) and write the curvature vector as
    \[
    k_{\alpha_t}(s)=(0,k^1_{\alpha_t}(s),\dots,k^{n-1}_{\alpha_t}(s))
    \]
    in Fermi coordinates \((s,h_1,\dots,h_{n-1})\) along \(\alpha_t\). Fix a cutoff \(\chi_{r/4}\) supported in the radius \(r/4\) normal tube around \(\alpha_t\) and define
    \[
    f_t(s,h_1,\dots,h_{n-1})=\chi_{r/4}(h_1,\dots,h_{n-1})\sum_{i=1}^{n-1} h_i\,k^i_{\alpha_t}(s).
    \]
    For the conformal metric \(g_t=e^{2f_t}g\) the geodesic curvature transforms as
    \[
    k_{\alpha_t,f_t}=e^{-f_t}\bigl(k_{\alpha_t}-(\nabla_g f_t)^\perp\bigr),
    \]
    where \((\nabla_g f_t)^\perp\) denotes the projection of \(\nabla_g f_t\) to the normal bundle of \(\alpha_t\). By construction \((\nabla_g f_t)^\perp=k_{\alpha_t}\) on the support of \(k_{\alpha_t}\), so \(k_{\alpha_t,f_t}\equiv 0\). Thus \(\alpha_t\) becomes a geodesic of \((M,g_t)\).
    
    Because \(\|k_{\alpha_t}\|_{C^k}\to0\) and the cutoff is fixed, we have \(\|f_t\|_{C^k}\to0\) and hence \(\|g_t-g\|_{C^k}\to0\) as $t\to0$. Performing the same localized perturbation in a disjoint neighborhood of \(\beta\) for sufficiently small \(t\) produces a metric \(g_t^*\in\mathcal V\) and a stationary geodesic net
    \[
    \Gamma_t = m(t)\,\alpha_t \cup n(t)\,\rho|_{b_t \to a_t} \cup m(t)\,\beta_t.
    \]
    The stationary geodesic net \(\Gamma_t\) is essential and embedded, and it can be made nondegenerate by an arbitrarily small further perturbation using the Bumpy Metrics Theorem for geodesic nets.

    \begin{figure}
        \centering
        \includegraphics[scale=0.5]{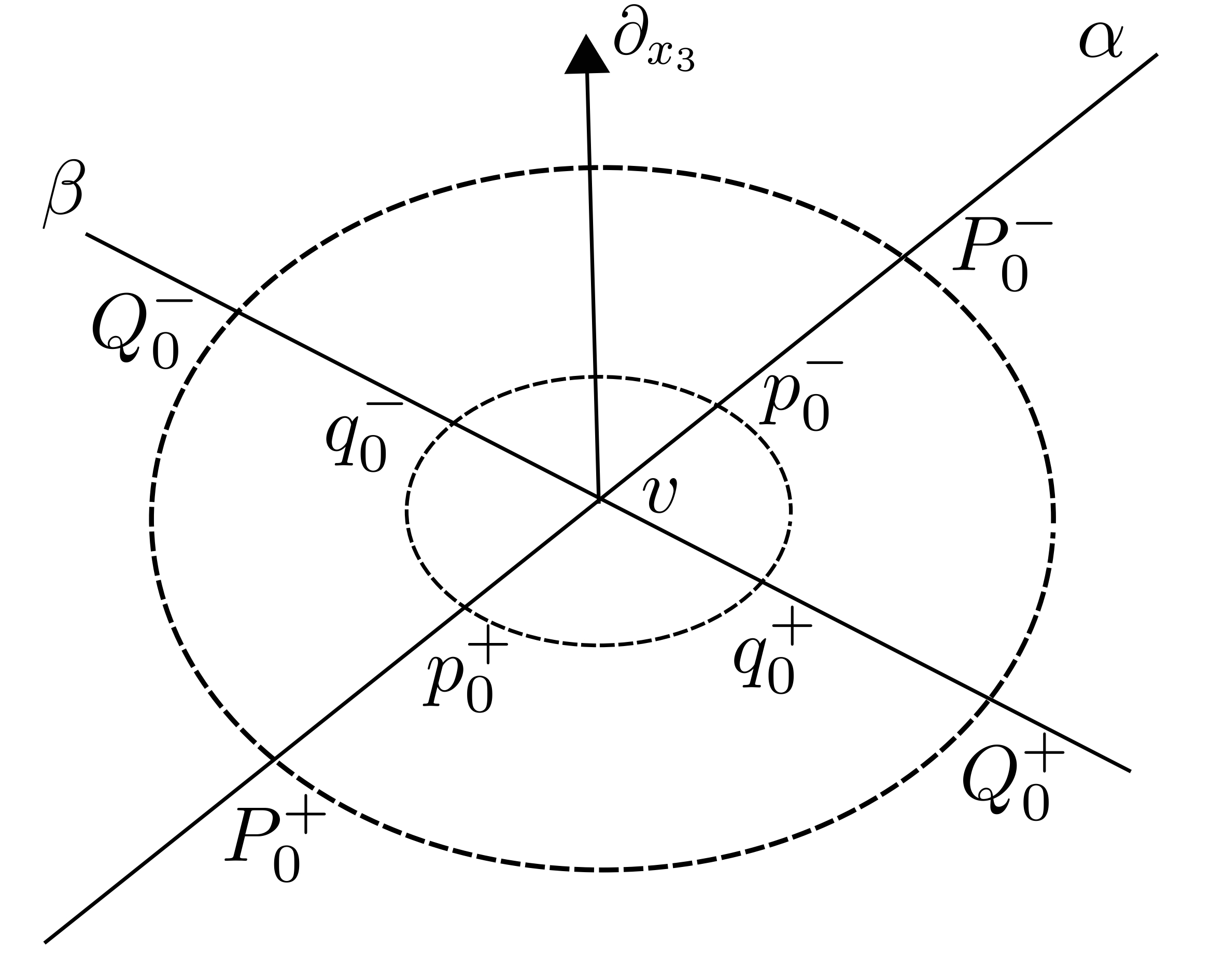}
        \caption{Geodesics $\alpha$ and $\beta$ near $v$}
        \label{drawing_4}
    \end{figure}
    
    \medskip\noindent\textbf{Case 2.} Suppose \(v\in\alpha\cap\beta\). Let \(r_{\mathrm{inj}}\) be the injectivity radius of \((M,g)\). Choose \(r<r_{\mathrm{inj}}\) and let \(V=\exp_v(B_r(0))\) be the geodesic neighborhood of \(v\) of radius \(r\). Define the exponential chart \(\tilde\varphi\colon B_r(0)\subset\mathbb R^n\to V\) by \(\tilde\varphi(x)=\exp_v(x)\). Introduce rescaled coordinates \(\varphi\colon B_{3/2}(0)\to V\) by
    \[
    \varphi(x)=\tilde\varphi\!\Big(\tfrac{2r}{3}x\Big)=\exp_v\!\Big(\tfrac{2r}{3}x\Big),\qquad x\in B_{3/2}(0).
    \]
    Consider the rescaled metric \(\tfrac{9}{4r^2}\varphi^*g\) on \(B_{3/2}(0)\). Identify \(\alpha\) and \(\beta\) with their preimages under \(\varphi\), which are straight lines in this chart (see Fig.~\ref{drawing_4}). Rotate the chart so that
    \(\operatorname{span}\{\partial_{x_1},\partial_{x_2}\}=\operatorname{span}\{\dot\alpha(v),\dot\beta(v)\}\).
    Denote
    \[
    \alpha\cap\partial B_{1/2}(0)=\{p_0^+,p_0^-\},\qquad
    \alpha\cap\partial B_1(0)=\{P_0^+,P_0^-\},
    \]
    and
    \[
    \beta\cap\partial B_{1/2}(0)=\{q_0^+,q_0^-\},\qquad
    \beta\cap\partial B_1(0)=\{Q_0^+,Q_0^-\}.
    \]
    
    Assume \(\alpha\) is traversed \(P_0^-\to v\to P_0^+\) and \(\beta\) is traversed \(Q_0^-\to v\to Q_0^+\). For small \(t>0\) set
    \[
    \omega_t^+:=\dot\alpha(v)+t\,\partial_{x_3},\qquad \omega_t^-:=-\dot\alpha(v)+t\,\partial_{x_3},
    \]
    and let \(\sigma_t^+\) and \(\sigma_t^-\) be the geodesics starting at \(v\) with initial velocities \(\omega_t^+\) and \(\omega_t^-\), respectively. Since \(\operatorname{diam}(V)<r_{\mathrm{inj}}\), \(\sigma_t^\pm\) meet \(\alpha\) and \(\beta\) only at \(v\) inside \(V\). Let \(p_t^\pm\) denote the unique points of \(\sigma_t^\pm\) whose nearest-point projections to \(\alpha\) are \(p_0^\pm\). Fix \(s_0\) with \(\alpha(s_0)=v\). Write \(\sigma_t^+:[s_0,s_0+1/2]\to M\) and \(\sigma_t^-:[s_0-1/2,s_0]\to M\) as graphs over \(\alpha\):
    \[
    \sigma_t^\pm(s)=\exp_{\alpha(s)}\bigl(v_t^\pm(s)\,\nu_t(s)\bigr),
    \]
    with
    \[
    \sigma_t^+(s_0)=v,\quad \sigma_t^+(s_0+1/2)=p_t^+,
    \qquad
    \sigma_t^-(s_0-1/2)=p_t^-,\quad \sigma_t^-(s_0)=v,
    \]
    where \(v_t^\pm\) are smooth on \([s_0-1/2,s_0+1/2]\) and \(\nu_t(s)\) is a smooth unit normal field along \(\alpha\). Smooth dependence of geodesics on initial conditions implies
    \[
    \|v_t^\pm\|_{C^{k+2}}\to0\qquad(t\to0).
    \]
    
    We construct smooth connectors \(\rho_t^+:[s_0+1/2,s_0+1]\to M\) and \(\rho_t^-:[s_0-1,s_0-1/2]\to M\) as follows. Choose a smooth bump \(\psi\in C_c^\infty(\mathbb R)\) with \(\psi\equiv1\) near \(0\) and supported in a small interval. Define
    \[
    u_t^+(s)=\psi(s-s_0-1/2)\sum_{j=0}^{k+2}\frac{(s-s_0-1/2)^j}{j!}\,(v_t^+)^{(j)}(s_0+1/2)
    \qquad(s\in[s_0+1/2,s_0+1]),
    \]
    and
    \[
    u_t^-(s)=\psi(s-s_0+1/2)\sum_{j=0}^{k+2}\frac{(s-s_0+1/2)^j}{j!}\,(v_t^-)^{(j)}(s_0-1/2)
    \qquad(s\in[s_0-1,s_0-1/2]).
    \]
    Set
    \[
    \rho_t^+(s)=\exp_{\alpha(s)}\bigl(u_t^+(s)\,\nu_t(s)\bigr),\qquad
    \rho_t^+(s_0+1/2)=p_t^+,\quad \rho_t^+(s_0+1)=P_0^+,
    \]
    and
    \[
    \rho_t^-(s)=\exp_{\alpha(s)}\bigl(u_t^-(s)\,\nu_t(s)\bigr),\qquad
    \rho_t^-(s_0-1)=P_0^-,\quad \rho_t^-(s_0-1/2)=p_t^-.
    \]
    The functions \(u_t^\pm\) match \(v_t^\pm\) at the endpoints up to order \(k+2\) and satisfy
    \begin{equation} \label{eq:estimate-3}
       \|u_t^\pm\|_{C^{k+2}} \to 0\qquad(t\to0).
    \end{equation}

    \begin{figure}
        \centering
        \includegraphics[scale=0.5]{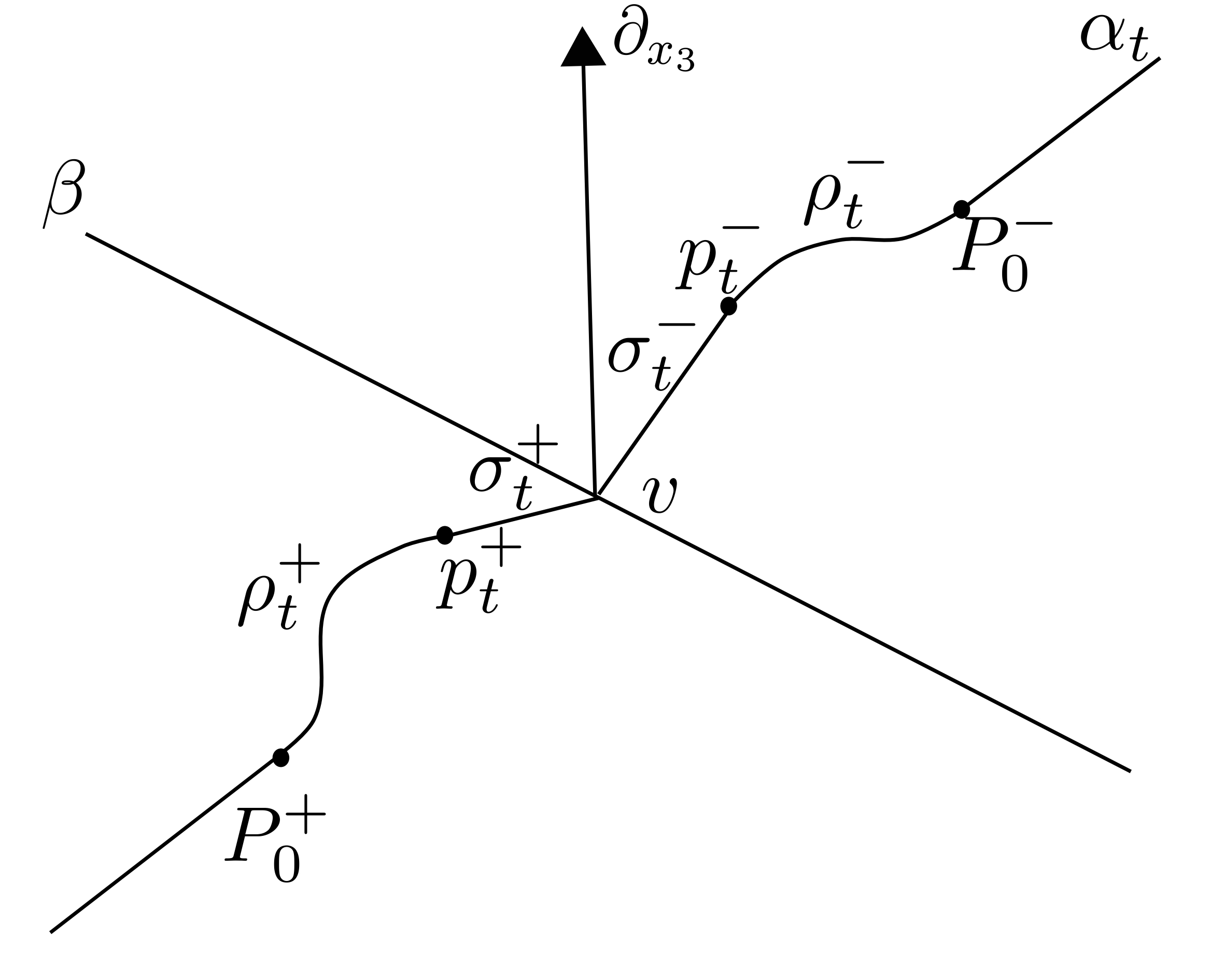}
        \caption{Perturbation of $\alpha$}
        \label{drawing_5}
    \end{figure}
    
    Concatenating the pieces yields a closed loop \(\alpha_t\) smooth except \(v\) (see Fig.~\ref{drawing_5}):
    \[
    \alpha_t=\sigma_t^+\ \cup\ \rho_t^+\ \cup\ \alpha|_{[P_0^+,P_0^-]}\ \cup\ \rho_t^-\ \cup\ \sigma_t^-.
    \]
    
    We now perform a local conformal perturbation to make \(\alpha_t\) geodesic. Let \(\{n_1,\dots,n_{n-1}\}\) be an orthonormal normal frame along \(\alpha_t\) and write the curvature vector in Fermi coordinates \((s,h_1,\dots,h_{n-1})\) as
    \[
    k_{\alpha_t}(s)=(0,k^1_{\alpha_t}(s),\dots,k^{n-1}_{\alpha_t}(s)).
    \]
    Set
    \[
    d:=\operatorname{dist}\bigl(\alpha|_{[P_0^-,p_0^-]}\cup\alpha|_{[p_0^+,P_0^+]},\ \beta|_{[Q_0^-,q_0^-]}\cup\beta|_{[q_0^+,Q_0^+]}\bigr)>0.
    \]
    Fix a cutoff \(\chi_{d/4}\) supported in the radius \(d/4\) normal tube around \(\alpha_t\) and define
    \[
    f_t(s,h_1,\dots,h_{n-1})=\chi_{d/4}(h_1,\dots,h_{n-1})\sum_{i=1}^{n-1} h_i\,k^i_{\alpha_t}(s).
    \]
    For the conformal metric \(g_t=e^{2f_t}g\) the geodesic curvature transforms as
    \[
    k_{\alpha_t,f_t}=e^{-f_t^+}\bigl(k_{\alpha_t}-(\nabla_g f_t^+)^\perp\bigr).
    \]
    By construction \((\nabla_g f_t)^\perp=k_{\alpha_t}\) on the support of \(k_{\alpha_t}\), hence \(k_{\alpha_t,f_t}\equiv0\). Thus \(\alpha_t\) becomes a geodesic of \((M,g_t)\).

    \begin{figure}
        \centering
        \includegraphics[scale=0.5]{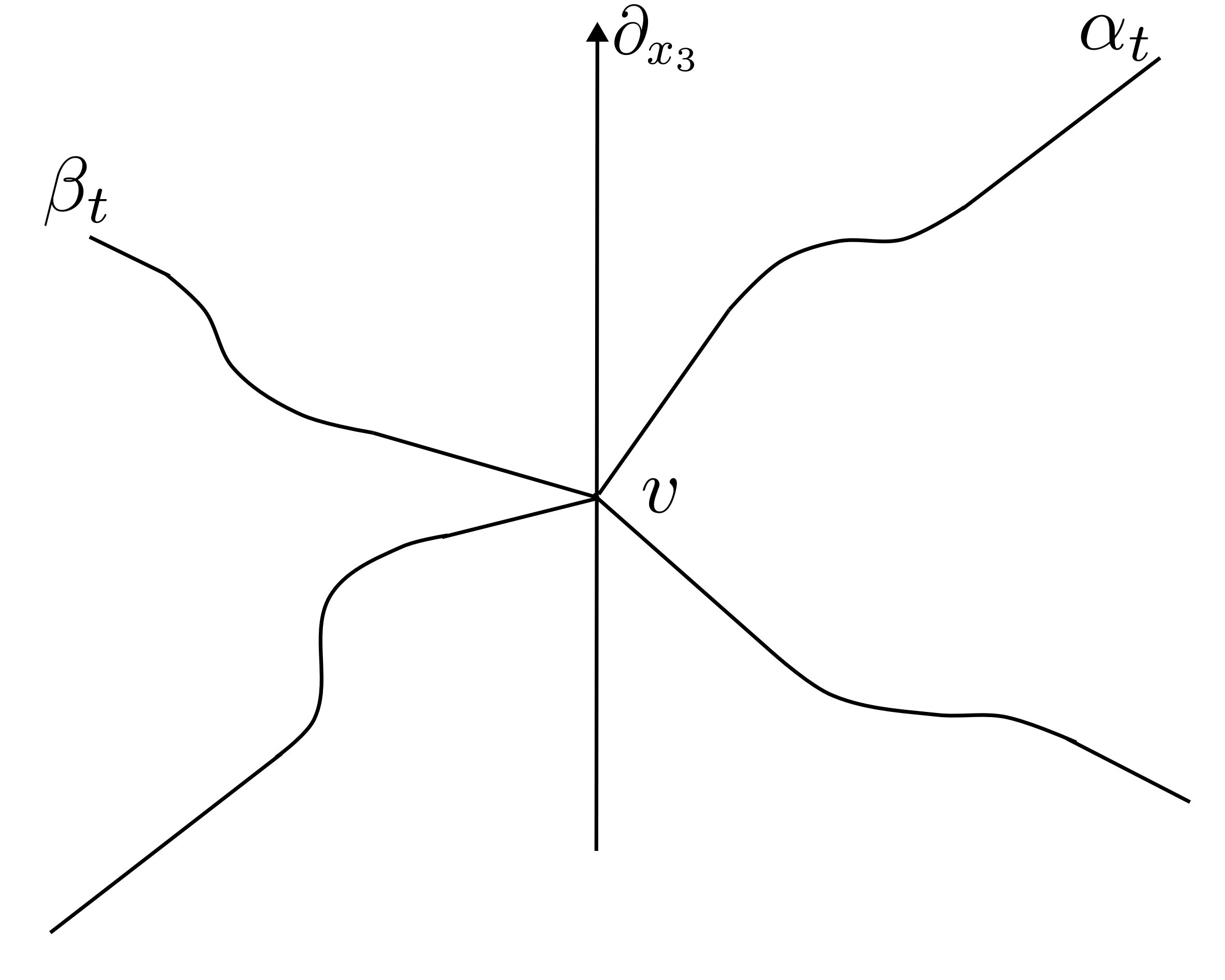}
        \caption{Stationary geodesic net $\Gamma_t = \alpha_t \cup \beta_t$}
        \label{drawing_6}
    \end{figure}
    
    Perform an analogous construction for \(\beta\). Set
    \[
    \tau_t^+:=\dot\beta(v)-t\partial_{x_3},\qquad \tau_t^-:=-\dot\beta(v)-t\partial_{x_3},
    \]
    let \(\eta_t^\pm\) be the geodesics starting at \(v\) with initial velocities \(\tau_t^\pm\), and construct connectors \(\lambda_t^\pm\) as above. Define
    \[
    \beta_t=\eta_t^+\ \cup\ \lambda_t^+\ \cup\ \beta|_{[Q_0^+,Q_0^-]}\ \cup\ \lambda_t^-\ \cup\ \eta_t^-,
    \]
    and set
    \[
    \tilde f_t(s,h_1,\dots,h_{n-1})=\chi_{d/4}(h_1,\dots,h_{n-1})\sum_{i=1}^{n-1} h_i\,k^i_{\beta_t}(s).
    \]
    By the choice of \(d\) we have \(\operatorname{supp}(f_t)\cap\operatorname{supp}(\tilde f_t)=\emptyset\).
    
    By~\eqref{eq:estimate-3} we have \(\|k_{\alpha_t}\|_{C^k},\|k_{\beta_t}\|_{C^k}\to0\) as \(t\to0\). Hence, \(\|f_t\|_{C^k} + \|\tilde f_t\|_{C^k}\to0\). Let \(g_t^*=e^{2(f_t+\tilde f_t)}g\). Then \(\|g_t^*-g\|_{C^k}\to0\) as \(t\to0\).
    
    By construction \(\alpha_t\) and \(\beta_t\) are geodesics for \(g_t^*\) and meet at \(v\) with inward unit tangents summing to zero. Therefore
    \[
    \Gamma_t:=\alpha_t\cup\beta_t
    \]
    is an essential, embedded stationary geodesic net for \((M,g_t^*)\) intersecting \(U\) (see Fig.~\ref{drawing_6}). For sufficiently small \(t\) we have \(g_t^*\in\mathcal V\). By the Bumpy Metrics Theorem for geodesic nets a generic arbitrarily small further perturbation makes \(\Gamma_t\) nondegenerate. This completes the proof.
\end{proof}

\begin{proof}[Proof of Main Theorem]
    Let $k \in \mathbb{Z}_{> 0}$ and $\{U_i\}$ be a countable basis of $M$. By Proposition~\ref{prop}, each $\mathcal{M}^k_{U_i}$ is open and dense in $\mathcal{M}^k$, so $\bigcap_i\mathcal{M}^k_{U_i}$ is $C^k$-Baire generic in $\mathcal{M}^k$. By Lemma 6.2 in~\cite{Staffa}, the set $\bigcap_i \mathcal{M}^{\infty}_{U_i} = \bigcap_k \bigcap_i \mathcal{M}^k_{U_i}$ is $C^\infty$-Baire generic in $\mathcal{M}^{\infty}$.
\end{proof}

\vspace{0.5cm} % Adjust vertical space as needed
\noindent Department of Mathematics, University of Toronto, Toronto, Canada\\
\textit{E-mail address}: \texttt{talant.talipov@mail.utoronto.ca}


\begin{thebibliography}{99}

\bibitem{AllardAlmgren} W.~K.~Allard and F.~J.~Almgren, Jr., ``The structure of stationary one dimensional varifolds with positive density,'' \textit{Invent. Math.} \textbf{34} (1976), 83--97.

\bibitem{Bangert} V.~Bangert, ``On the existence of closed geodesics on two-spheres,'' \textit{Int. J. Math.} \textbf{4} (1993), 1--10.

\bibitem{Cheng} H.~Y.~Cheng, ``Stable geodesic nets in convex hypersurfaces,'' \textit{J. Geom. Anal.} \textbf{34} (2024), no.~56.

\bibitem{Franks} J.~Franks, ``Geodesics on $S^2$ and periodic points of annulus homeomorphisms,'' \textit{Invent. Math.} \textbf{108} (1992), 403--418.

\bibitem{HassMorgan} J.~Hass and F.~Morgan, ``Geodesic nets on the 2-sphere,'' \textit{Proc. Amer. Math. Soc.} \textbf{124} (1996), 3843--3850.

\bibitem{Hingston} N.~Hingston, ``On the growth of the number of closed geodesics on the two-sphere,'' \textit{Int. Math. Res. Not.} (1993), 253--262.

\bibitem{IrieMarquesNeves} K.~Irie, F.~C.~Marques, and A.~Neves, ``Density of minimal hypersurfaces for generic metrics,'' \textit{Ann. of Math.} \textbf{187} (2018), 963--972.

\bibitem{Katok} A.~B.~Katok, ``Ergodic perturbations of degenerate integrable Hamiltonian systems,'' \textit{Izv. Akad. Nauk SSSR Ser. Mat.} \textbf{37} (1973), 539--576.

\bibitem{LiokumovichStaffa} Y.~Liokumovich and B.~Staffa, ``Generic density of geodesic nets,'' \textit{Selecta Math. (N.S.)} \textbf{30} (2024), no.~14.

\bibitem{LiStaffa} X.~Li and B.~Staffa, ``On the equidistribution of closed geodesics and geodesic nets,'' \textit{Trans. Amer. Math. Soc.} \textbf{376} (2023), no.~12, 8825--8855.

\bibitem{LyusternikFet} L.~A.~Lyusternik and A.~I.~Fet, ``Variational problems on closed manifolds,'' \textit{Dokl. Akad. Nauk SSSR} \textbf{81} (1951), 17--18.

\bibitem{MarquesNevesSong} F.~C.~Marques, A.~Neves, and A.~Song, ``Equidistribution of minimal hypersurfaces for generic metrics,'' \textit{Invent. Math.} \textbf{216} (2019), 421--443.

\bibitem{NabutovskyParsch} A.~Nabutovsky and F.~Parsch, ``Geodesic nets: Some examples and open problems,'' \textit{Exp. Math.} \textbf{32} (2020), no.~1, 1--25.

\bibitem{NabutovskyRotman} A.~Nabutovsky and R.~Rotman, ``Shapes of geodesic nets,'' \textit{Geom. Topol.} \textbf{11} (2007), no.~2, 1225--1254.

\bibitem{Rademacher} H.-B.~Rademacher, ``On the average indices of closed geodesics,'' \textit{J. Differential Geom.} \textbf{29} (1989), 65--83.

\bibitem{Rotman} R.~Rotman, ``Flowers on Riemannian manifolds,'' \textit{Math. Z.} \textbf{269} (2011), 543--554.

\bibitem{Staffa} B.~Staffa, ``Bumpy metrics theorem for geodesic nets,'' \textit{J. Topol. Anal.} \textbf{17} (2025), no.~14, 1013--1044.

\bibitem{Staffa_Weyl} B.~Staffa, ``Weyl law for 1-cycles,'' preprint (2024).

\bibitem{Ziller} W.~Ziller, ``Geometry of the Katok examples,'' \textit{Ergodic Theory Dynam. Systems} \textbf{3} (1983), 135--157.

\end{thebibliography}
\end{document}